\documentclass[11pt,oneside,reqno]{amsart}

\usepackage{amssymb}
\usepackage{algpseudocode}
\usepackage{algorithm}
\usepackage{subcaption}
\usepackage[singlelinecheck=false]{caption}
\usepackage{verbatim}
\usepackage{graphicx}
\usepackage[noadjust]{cite}
\RequirePackage{dsfont} \setlength{\textwidth}{15.5cm}
\setlength{\textheight}{23.0cm} \setlength{\voffset}{-1.5cm}
\setlength{\hoffset}{-1.5cm} \addtolength{\headheight}{3.5pt}
\frenchspacing \scrollmode
\allowdisplaybreaks
\usepackage{graphicx}
\usepackage{epstopdf}
\usepackage[usenames]{color}
\definecolor{darkred}{RGB}{139,0,0}
\definecolor{darkgreen}{RGB}{0,100,0}
\definecolor{darkmagenta}{RGB}{139,0,139}

\usepackage{amsmath}
\usepackage{tikz}

\makeatletter
\newcommand{\xleftrightarrow}[2][]{\ext@arrow 3359\leftrightarrowfill@{#1}{#2}}
\makeatother

\newcommand{\xdasharrow}[2][->]{
\tikz[baseline=-\the\dimexpr\fontdimen22\textfont2\relax]{
\node[anchor=south,font=\scriptsize, inner ysep=1.5pt,outer xsep=2.2pt](x){#2};
\draw[shorten <=3.4pt,shorten >=3.4pt,dashed,#1](x.south west)--(x.south east);
}
}

\newcommand{\DEBUG}{}

\ifdefined\DEBUG%

  \def\rem#1{{\marginpar{\raggedright\scriptsize #1}}}
  \newcommand{\pmr}[1]{\rem{\color{blue}{$\bullet$ #1}}}
  \newcommand{\ppr}[1]{\rem{\color{red}{$\bullet$ #1}}}
 \else%

  \newcommand{\ppr}[1]{}
  \newcommand{\pmr}[1]{}
 \fi

\theoremstyle{plain}
\newtheorem{theorem}{Theorem}

\newtheorem{corollary}{Corollary}

\theoremstyle{definition}
\newtheorem{remark}{Remark}

\usepackage{enumitem}
\setcounter{tocdepth}{1}
\begin{document}

\title
[Exceptional set]
{On the properties of the exceptional set  \\ for the randomized Euler and Runge-Kutta schemes}

\author[T. Bochacik]{Tomasz Bochacik}
\address{AGH University of Science and Technology,
Faculty of Applied Mathematics,\newline
Al. A.~Mickiewicza 30, 30-059 Krak\'ow, Poland}
\email{bochacik@agh.edu.pl}

\begin{abstract}
We show that the probability of the exceptional set decays exponentially for a~broad class of randomized algorithms approximating solutions of ODEs, admitting a certain error decomposition. This class includes randomized explicit and implicit Euler schemes, and the randomized two-stage Runge-Kutta scheme (under inexact information). We design a confidence interval for the exact solution of an IVP and perform numerical experiments to illustrate the theoretical results.
\newline
\newline
\textbf{Key words:} exceptional set, confidence region, noisy information, randomized algorithm, explicit and implicit Euler schemes, two-stage Runge-Kutta scheme
\newline
\newline
\textbf{MSC 2010:} 65C05,\ 65C20,\ 65L05,\ 65L06,\ 65L70
\end{abstract}
\maketitle
\tableofcontents

In this paper we consider randomized versions of the following algorithms approximating solutions of ordinary differential equations (ODEs): explicit Euler scheme, implicit Euler scheme, and two-stage Runge-Kutta scheme. Error bounds for these and other randomized algorithms have been broadly studied in the literature, see \cite{daun1, backward_euler, HeinMilla, JenNeuen, Kac1, KruseWu_1, NOV, stengle2}. In \cite{randRK, randEuler}, the $L^p(\Omega)$-norm of worst-case errors of the three aforementioned schemes has been analysed in the setting of inexact information.

The main concept investigated in this paper is the exceptional set, i.e. the set where the random worst-case error of a given randomized algorithm does not achieve the rate of convergence given by the mean-square error bound. As a well-known example we may recall the Monte Carlo integration. If the integrand is Borel-measurable and bounded, Hoeffding's inequality can be employed to show that the probability of the exceptional set of the crude MC method has an exponential decay, see \cite{pages, mc-pp}. 

Similar approach, based on Azuma's inequality (see \cite{azuma}), has been applied by S. Heinrich and B. Milla to the family of randomized Taylor schemes for ODEs. For these algorithms, the probability of the exponential set also proved to decay exponentially, see Proposition 2 in \cite{HeinMilla}. In this paper, we aim to extend this result in two directions. Firstly, we will cover the algorithms investigated in \cite{randEuler, randRK}, under very mild assumptions considered in these papers. Secondly, our analysis will be performed in the setting of inexact information. 

The structure of this paper is as follows. In section 1 we introduce notation, the class of initial value problems, and the model of computation. We also recall definitions of randomized Euler and Runge-Kutta schemes under inexact information. In section 2 we provide an upper bound for the probability of the exceptional set for each algorithm admitting a certain error decomposition. The general setting considered in this paper covers all algorithms analysed in \cite{randRK, randEuler, HeinMilla}. In section 3 we construct a confidence region for the exact solution of the IVP, based on the inequality established in the previous section. We also carry out numerical experiments which illustrate theoretical findings. Summary of this paper is provided in section 4, as well as directions for further research.

\section{Preliminaries} \label{sec:prel}

\subsection{Notation and the class of IVPs}

Let $\|\cdot\|$ be the one norm in $\mathbb{R}^d$, i.e. $\|x\|=\sum\limits_{k=1}^d|x_k|$ for $x\in \mathbb{R}^d$. For $x\in\mathbb{R}^d$ and $r\in [0,\infty)$, we denote by $B(x,r)=\{y\in\mathbb{R}^d \colon \|y-x\|\leq r\}$ the closed ball in $\mathbb{R}^d$ with center $x$ and radius $r$. Moreover, $B(x,\infty) = \mathbb{R}^d$ for all $x\in\mathbb{R}^d$.

We consider IVPs of the following form: 
\begin{equation}
	\label{eq:ode}
		\left\{ \begin{array}{ll}
			z'(t)= f(t,z(t)), \ t\in [a,b], \\
			z(a) = \eta, 
		\end{array}\right.
\end{equation}
where $-\infty < a < b < \infty$, $\eta\in\mathbb{R}^d$, $f \colon [a,b]\times\mathbb{R}^d\to\mathbb{R}^d$, $d\in\mathbb{Z}_+$. 

As in \cite{randRK, randEuler}, by $F^\varrho_R=F^\varrho_R(a,b,d,\varrho,K,L)$ we denote the class of pairs $\left(\eta,f\right)$ satisfying the following conditions:
\begin{itemize}
\setlength\itemsep{4pt}
\item[(A0)] $\left\|\eta\right\|\leq K$,
\item[(A1)] $f\in \mathcal{C}\left([a,b]\times\mathbb{R}^d\right)$,
\item[(A2)] $\| f(t,x)\| \leq K\left(1+\left\|x\right\|\right)$ for all $(t,x)\in [a,b]\times\mathbb{R}^d$,
\item[(A3)] $\| f(t,x) - f(s,x) \|\leq L|t-s|^\varrho$ for all $t,s\in [a,b], x\in B\left(\eta,R\right)$,
\item[(A4)] $\| f(t,x) - f(t,y) \|\leq L\|x-y\|$ for all $t\in [a,b], x,y \in B\left(\eta,R\right)$.
\end{itemize}
Under the above assumptions, the solution of \eqref{eq:ode} exists and is unique, cf. \cite{randRK}.

\subsection{Model of computation -- general description}

Let $(\Omega,\Sigma,\mathbb{P})$ be a complete probability space and let $\mathcal{N}=\{A\in\Sigma \colon \mathbb{P}(A)=0\}$. By $F$ we denote a certain class of IVPs of the form \eqref{eq:ode} -- or equivalently pairs $(\eta,f)$. For example, we can choose $F=F^\varrho_R$ with specified parameters $a,b,d,\varrho,K,L,R$.

We investigate randomized algorithms approximating solutions of \eqref{eq:ode} based on inexact information about $f$. To this end, we consider the following model of computation, cf. \cite{randRK,randEuler}. We introduce a parameter $\delta$, which will be called a precision parameter (or a noise parameter). By $\mathcal{K}(\delta)$ we denote a~class of noise functions $$\tilde{\delta} \colon [a,b]\times\mathbb{R}^d\to\mathbb{R}^d$$ such that $\tilde{\delta}$ is Borel measurable and may satisfy some additional assumptions (where $\delta$ can be a parameter, e.g. a Lipschitz constant). Let 
\begin{equation} \label{eq:V-f-delta}
    V_{f}(\delta)=\{ \tilde f \colon \exists_{\tilde\delta_f\in\mathcal{K}(\delta)} \ \tilde f =f+\tilde\delta_f\}
\end{equation} 
and 
\begin{equation} \label{eq:V-eta-f}
    V_{(\eta,f)}(\delta) = B(\eta,\delta)\times V_{f}(\delta)
\end{equation}
for $(\eta,f)\in F$ and $\delta\in [0,1]$.

Let $(\eta,f)\in F$ and $(\tilde\eta,\tilde f)\in V_{(\eta,f)}(\delta)$. Any vector of the following form:
\begin{equation*}
    N(\tilde \eta,\tilde f)=[\tilde\eta,\tilde f(t_1,y_1),\ldots,\tilde f(t_{i},y_{i}),\tilde f(\theta_1,z_1),\ldots,\tilde f(\theta_{i},z_{i})],
\end{equation*}
where $i\in\mathbb{N}$, $t_1,\ldots,t_{i}\in [a,b]$ are deterministic values and $(\theta_1,\ldots,\theta_{i})$ is a random vector on  $(\Omega,\Sigma,\mathbb{P})$, will be called a vector of noisy information about $(\eta,f)$ based on $2i$ noisy evaluations of $f$. Moreover, we assume that
\begin{equation*}
    (y_1,z_1)=\psi_1(\tilde\eta)
\end{equation*}
and
\begin{equation*}
    (y_j,z_j)=\psi_j\Bigl(\tilde f(t_1,y_1),\ldots,\tilde f(t_{j-1},y_{j-1}),\tilde f(\theta_1,z_1),\ldots,\tilde f(\theta_{j-1},z_{j-1}),\tilde\eta\Bigr)
\end{equation*}
for Borel measurable mappings $\psi_j:\mathbb{R}^{(2j+1)d}\to\mathbb{R}^d\times \mathbb{R}^d$, $j\in\left\{2,\ldots,i\right\}$. We define the following filtration: $\mathcal{F}_0=\sigma(\mathcal{N})$ and $\mathcal{F}_j =\sigma\bigl( \sigma\left(\theta_1,\ldots,\theta_j\right)\cup\mathcal{N}\bigr)$ for $j\in\{1,\ldots,i\}$.

For a given $n\in\mathbb{N}$, we consider the class $\Phi_n$ of algorithms $\mathcal{A}$ which aim to compute the approximate solution $z$ of \eqref{eq:ode}, using $N(\tilde\eta,\tilde f)$ based on at most $n$ noisy evaluations of $f$. That is, the class $\Phi_n$ contain algorithms of the following form:
\begin{equation*}
\label{def_alg}
    \mathcal{A}(\tilde\eta,\tilde f,\delta)=\varphi(N(\tilde \eta,\tilde f)),
\end{equation*}
where
\begin{equation*}
    \varphi:\mathbb{R}^{(2i+1)d}\to D([a,b];\mathbb{R}^d)
\end{equation*}
is a Borel measurable function. In the Skorokhod space $D([a,b];\mathbb{R}^d)$, endowed with the Skorokhod topology, we consider the Borel $\sigma$-field $\mathcal{B}(D([a,b];\mathbb{R}^d))$.

\subsection{The algorithms} We recall the algorithms which will be investigated in this paper. For each of them, we specify the class of IVPs and the class of noise functions, for which error bounds have been established in \cite{randRK, randEuler}. Notation introduced in this subsection is generally consistent with those articles, however some changes have been introduced in order to facilitate reading of this paper.

Let $n\in\mathbb{Z}_+$, $h=\frac{b-a}{n}$, $t_j = a+jh$ for $j\in\{0,1,\ldots,n\}$, $\theta_j = t_{j-1} + \tau_jh$, $\tau_j\sim U(0,1)$ for $j\in\{1,\ldots,n\}$. We assume that the family of random variables $\{\tau_1,\ldots,\tau_n\}$ is independent. \medskip

\textit{The randomized explicit Euler method under inexact information} is defined as follows:
\begin{equation}\label{eq:explicit_euler}
\bar W^0 = \tilde{\eta},  \ \ \bar W^j = \bar W^{j-1} + h\cdot \tilde f\left(\theta_j, \bar W^{j-1}\right), \ j\in\{1,\ldots,n\}.
\end{equation}
The approximate solution of \eqref{eq:ode} on $[a,b]$ is the function $\bar l^{EE} \colon [a,b]\to\mathbb{R}^d$ given by $\bar l^{EE}(t)=\bar l^{EE}_j(t)$ for $t\in [t_{j-1},t_j]$, where
\begin{equation} \label{eq:le}
\bar l^{EE}_j(t) = \frac{\bar W^{j}-\bar W^{j-1}}{h}(t-t_{j-1})+\bar W^{j-1}, \ \ j\in\{1,\ldots,n\}.
\end{equation}

The randomized explicit Euler scheme has been analyzed in \cite{randEuler} assuming that $(\eta,f)\in F^\varrho_{R_{EE}}$ and $\bigl(\tilde{\eta},\tilde{f}\bigr)\in V^{EE}_{(\eta,f)}(\delta)$, where 
\begin{equation} \label{eq:R-EE}
    R_{EE} = \max\bigl\{ (K+2)e^{(K+1)(b-a)} + K-1 , K(1+b-a)e^{K(b-a)} + K \bigr\},
\end{equation}
the class of noise functions is defined as
\begin{align} \label{eq:K-EE}
\mathcal{K}_{EE}(\delta) & = \left\{ \tilde{\delta} \colon [a,b]\times\mathbb{R}^d\to\mathbb{R}^d \ \colon \ \tilde{\delta} \text{ is Borel measurable}, \right. \notag \\ & \left. \ \ \ \ \ \ \ \ \  \|\tilde{\delta}(t,y)\|\leq\delta\left(1+\left\| y\right\|\right)  \hbox{for all} \ t\in [a,b], y\in \mathbb{R}^d \right\},
\end{align}
and similarly as in \eqref{eq:V-f-delta}--\eqref{eq:V-eta-f}, we define
\begin{equation*}
    V^{EE}_{f}(\delta)=\{ \tilde f \colon \exists_{\tilde\delta_f\in\mathcal{K}_{EE}(\delta)} \ \tilde f =f+\tilde\delta_f\}
\end{equation*} 
and 
\begin{equation*}
    V^{EE}_{(\eta,f)}(\delta) = B(\eta,\delta)\times V^{EE}_{f}(\delta)
\end{equation*}
for $(\eta,f)\in F^\varrho_{R_{EE}}$ and $\delta\in [0,1]$. \medskip

\textit{The randomized implicit Euler scheme under inexact information} is given by the following relation:
\begin{equation}
\label{eq:implicit_euler}
\bar U^0 = \tilde \eta,  \ \ \bar U^j = \bar U^{j-1} + h\cdot \tilde f\bigl(\theta_j, \bar U^{j}\bigr), \ j\in\{1,\ldots,n\}.
\end{equation}
Function $\bar l^{IE} \colon [a,b]\to\mathbb{R}^d$ is defined in the same fashion as $\bar l^{EE}$ (i.e. through the linear interpolation) but this time we use knots $(t_j,\bar U_j)$ instead of $(t_j,\bar W_j)$ for $j\in\{0,1,\ldots,n\}$.

The error bound for this scheme in \cite{randEuler} has been obtained assuming that $(\eta,f)\in F^\varrho_{\infty}$ and $\bigl(\tilde{\eta},\tilde{f}\bigr)\in V^{IE}_{(\eta,f)}(\delta)$, where
\begin{equation} \label{eq:K-IE}
\mathcal{K}_{IE}(\delta) = \left\{ \tilde{\delta} \in \mathcal{K}_{EE}(\delta) \colon \bigl\| \tilde{\delta}(t,x) - \tilde{\delta}(t,y) \bigr\| \leq \delta \|x-y\| \text{ for all }t\in[a,b], x,y\in\mathbb{R}^d \right\}
\end{equation}
and definitions of $V_f^{IE}(\delta)$ and $V^{IE}_{(\eta,f)}(\delta)$ are analogous as definitions formulated above for the explicit scheme. \medskip

\textit{The randomized two-stage Runge Kutta scheme under inexact information} is given by
\begin{equation}
\label{def_rand_RK_1}
		\left\{ \begin{array}{ll}
			\bar V^0 := \tilde\eta,\\
			\bar V_\tau^j := \bar V^{j-1} + h \tau_j \tilde f (t_{j-1},  \bar V^{j-1}), \\
			\bar V^j := \bar V^{j-1} + h \tilde f(\theta_j, \bar V_\tau^j).
		\end{array}\right.,
\end{equation}
where $j\in\{1,\ldots,n\}$. In \cite{randRK} it was assumed that $(\eta,f)\in F^\varrho_{R_{RK}}$ and $\bigl(\tilde{\eta},\tilde{f}\bigr)\in V^{RK}_{(\eta,f)}(\delta)$, where
\begin{eqnarray}
\label{eq:R-RK}
    && R^{RK} = \max\Bigl\{K(1+b-a)\Bigl(1+e^{K(b-a)}(1+K(b-a))\Bigr),\notag\\
    && K+(b-a)(1+K)+\Bigl(\frac{1}{K}+1\Bigr)(1+K(b-a))\Bigl(e^{K(b-a)(1+K(b-a))}(1+K)-1\Bigr)\Bigr\}.
\end{eqnarray}
and
\begin{equation} \label{eq:K-RK}
\mathcal{K}^{RK}(\delta)=\{\tilde \delta:[a,b]\times\mathbb{R}^d\to \mathbb{R}^d \colon \tilde\delta-\hbox{Borel measurable}, \
   \|\tilde\delta(t,y)\|\leq\delta \ \hbox{for all} \ t\in [a,b], y\in \mathbb{R}^d \}.
\end{equation}
Definitions of $\bar l^{RK}$, $V_f^{RK}(\delta)$ and $V^{RK}_{(\eta,f)}(\delta)$ are analogous as for Euler schemes.

\section{Main result}
The following theorem is the generalization of Proposition 2 in \cite{HeinMilla}. As we will see, it can be applied to any randomized algorithm admitting a certain error decomposition.

\begin{theorem} \label{general_theorem}
Let $(\mathcal{A}_n)_{n=1}^\infty$ be a sequence of algorithms approximating the solution of the IVP \eqref{eq:ode} such that $\mathcal{A}_n \in \Phi_n$ and for each $(\eta,f)$ in certain class $F$ and $\bigl(\tilde \eta, \tilde f\bigr)\in V_{(\eta,f)}(\delta)$, the following bound for an approximation error holds with probability $1$ for each $n\in\mathbb{N}$, $n\geq n_0$:
\begin{equation} \label{eq:assump}
\sup_{a\leq t\leq b} \|z(t)-\mathcal{A}_n(t)\| \leq C_1\max_{1\leq k\leq n}\Bigl\| \sum_{j=1}^k E_j(h) \Bigr\|+ C_2h^{\gamma} + C_3\delta, 
\end{equation}
where 
\begin{itemize}
\setlength{\itemsep}{0.5em}
\item $\bigl(E_j(h)\bigr)_{j=1}^n$ is an $\bigl(\mathcal{F}_j\bigr)_{j=1}^n$-adapted process;
\item $\displaystyle \mathbb{E} \bigl(  E_j(h) | \mathcal{F}_{j-1} \bigr) = 0$ and $\|E_j(h)\|\leq C_0h^{\gamma+1/2}$ with probability $1$ for all $j\in\{1,\ldots,n\}$;
\item $C_0,C_1, C_2, C_3>0$ are constants which do not depend on $n$.
\end{itemize}

Then there exist constants $c_1, c_2 >0$ not dependent on $n$, such that for all $(\eta,f)\in F$, $\bigl(\tilde \eta, \tilde f\bigr)\in V_{(\eta,f)}(\delta)$, $n\in\mathbb{N}$, $n\geq n_0$, and for all $\xi \geq c_1$, the algorithm $\mathcal{A}_n$ satisfies
\begin{equation} \label{eq:theorem}
\mathbb{P} \Bigl( \sup_{a\leq t\leq b}\bigl\|z(t)-\mathcal{A}_n(t)\bigr\| >\xi \max\bigl\{h^\gamma, \delta\bigr\} \Bigr) \leq \exp (-c_2\xi^2).
\end{equation}
\end{theorem}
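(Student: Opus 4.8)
The plan is to apply a maximal version of Azuma's inequality to the martingale $S_k:=\sum_{j=1}^k E_j(h)$, whose defining properties are furnished by the hypotheses of \eqref{eq:assump}. First I would reduce the event in \eqref{eq:theorem} to a deviation statement for $\max_{1\le k\le n}\|S_k\|$. Since $h^\gamma\le\max\{h^\gamma,\delta\}$ and $\delta\le\max\{h^\gamma,\delta\}$, the deterministic terms in \eqref{eq:assump} are bounded by $(C_2+C_3)\max\{h^\gamma,\delta\}$, so on the event $\{\sup_{a\le t\le b}\|z(t)-\mathcal{A}_n(t)\|>\xi\max\{h^\gamma,\delta\}\}$ one necessarily has $\max_{1\le k\le n}\|S_k\|>\tfrac{\xi-C_2-C_3}{C_1}h^\gamma$, using also $\max\{h^\gamma,\delta\}\ge h^\gamma$. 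Writing $\lambda:=\tfrac{\xi-C_2-C_3}{C_1}$, it therefore suffices to bound $\mathbb{P}(\max_{1\le k\le n}\|S_k\|>\lambda h^\gamma)$; I assume for now $\xi>C_2+C_3$ so that $\lambda>0$, and the precise threshold $c_1$ will be fixed at the end.

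Next I would handle the vector structure coordinatewise. Writing $E_j(h)=(E_j^{(1)},\dots,E_j^{(d)})$ and $S_k^{(l)}=\sum_{j=1}^k E_j^{(l)}$, each $(S_k^{(l)})_k$ is a scalar $(\mathcal{F}_k)$-martingale whose increments satisfy $|E_j^{(l)}|\le\|E_j(h)\|\le C_0 h^{\gamma+1/2}$, since the one norm dominates every coordinate. Because $\|S_k\|=\sum_{l=1}^d|S_k^{(l)}|$ and hence $\max_k\|S_k\|\le\sum_{l=1}^d\max_k|S_k^{(l)}|$, a union bound gives
\begin{equation*}
\mathbb{P}\Bigl(\max_{1\le k\le n}\|S_k\|>\lambda h^\gamma\Bigr)\le\sum_{l=1}^d\mathbb{P}\Bigl(\max_{1\le k\le n}|S_k^{(l)}|>\tfrac{\lambda}{d}h^\gamma\Bigr).
\end{equation*}
To each summand I would apply Azuma's inequality in its maximal form, obtained by combining the Hoeffding bound on $\mathbb{E}\,e^{\theta S_n^{(l)}}$ with Doob's submartingale inequality for the exponential submartingale $e^{\theta S_k^{(l)}}$; this yields $\mathbb{P}(\max_k|S_k^{(l)}|>s)\le 2\exp(-s^2/(2nC_0^2h^{2\gamma+1}))$.

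The decisive step is the cancellation coming from $h=\tfrac{b-a}{n}$. Substituting $s=\tfrac{\lambda}{d}h^\gamma$, the exponent becomes $-\tfrac{\lambda^2 h^{2\gamma}}{2d^2 n C_0^2 h^{2\gamma+1}}=-\tfrac{\lambda^2}{2d^2C_0^2\,nh}=-\tfrac{\lambda^2}{2d^2C_0^2(b-a)}$, which no longer depends on $n$; this is precisely where the extra factor $h^{1/2}$ in the increment bound $\|E_j(h)\|\le C_0h^{\gamma+1/2}$ is consumed. Summing over the $d$ coordinates and inserting $\lambda$ gives
\begin{equation*}
\mathbb{P}\Bigl(\max_{1\le k\le n}\|S_k\|>\lambda h^\gamma\Bigr)\le 2d\exp\Bigl(-\frac{(\xi-C_2-C_3)^2}{2d^2C_0^2C_1^2(b-a)}\Bigr).
\end{equation*}
Finally I would absorb the prefactor $2d$ and the shift $C_2+C_3$ into the constants: with $A:=\tfrac{1}{2d^2C_0^2C_1^2(b-a)}$, setting $c_2:=A/2$ and choosing $c_1$ large enough that $\tfrac{A}{2}\xi^2-2A(C_2+C_3)\xi+A(C_2+C_3)^2\ge\ln(2d)$ for all $\xi\ge c_1$ turns the right-hand side into $\exp(-c_2\xi^2)$, which is exactly \eqref{eq:theorem}.

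The main obstacle is securing an exponent that does not grow with $n$: a crude estimate in which both $\max_k\|S_k\|$ and the increments are of size $\mathcal{O}(h^\gamma)$ would produce a variance proxy of order $nh^{2\gamma}$ and an $n$-dependent (indeed diverging) exponent. It is the mean-square-root gain encoded in $C_0h^{\gamma+1/2}$, together with $nh=b-a$, that renders the Azuma exponent a genuine constant; here this gain is supplied as the hypothesis $\|E_j(h)\|\le C_0h^{\gamma+1/2}$, whereas for each concrete scheme verifying that the error decomposition \eqref{eq:assump} actually delivers it is where the real work would lie. A secondary technical point is that one needs the maximal (Doob-reinforced) form of Azuma rather than the plain one, since \eqref{eq:assump} controls the running maximum $\max_{1\le k\le n}\|S_k\|$ and not merely the terminal sum.
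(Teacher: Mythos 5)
Your proposal is correct and follows essentially the same route as the paper: reduce the event via the error decomposition to a deviation of $\max_{1\le k\le n}\bigl\|\sum_{j=1}^k E_j(h)\bigr\|$, pass to coordinates with a union bound (factor $d$ in the threshold), apply a maximal form of Azuma's inequality so that $nh=b-a$ cancels the $n$-dependence, and absorb the prefactor and the shift $C_2+C_3$ into $c_1,c_2$. The only (immaterial) difference is that you derive the maximal bound via Doob's inequality for the exponential submartingale, whereas the paper invokes Lemma 2 and Remark 1 of Azuma's original article together with the exponential Chebyshev inequality, which yields a constant $8$ in place of your $2$.
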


\begin{proof}

Let us define $$E(h) = \max_{1\leq k\leq n} \Bigl\|\sum_{j=1}^k E_j(h)\Bigr\|,$$ $$K_j(h) = \underset{\omega\in\Omega}{\text{ess sup}} \|E_j(h)\|\leq C_0h^{\gamma+1/2}$$ and $$v^2(h) = \sum_{j=1}^n \bigl(K_j(h)\bigr)^2.$$ It is easy to see that 
\begin{equation} \label{eq:v2}
v^2(h) \leq n \cdot C_0^2 h^{2\gamma+1} = C_0^2(b-a)h^{2\gamma}.
\end{equation}

Let us consider any $i\in\{1,\ldots,d\}$ and let $E^i_j(h)$ denotes the $i$-th coodinate of $E_j(h)$.
By Remark 1 in \cite{azuma}, the sequence $(E^i_j(h)/K_j(h))_{j=1}^n$ satisfies the assumptions of Lemma 2 in \cite{azuma}. Hence, for any sequence $(b_j)_{j=1}^n$ of real numbers and for any real number $t$, the following inequality holds:
\begin{equation} \label{eq:azuma}
\mathbb{E} \Bigl[ \exp \Bigl(  t\cdot \max_{1\leq k\leq n} \Bigl| \sum_{j=1}^k b_j \frac{E^i_j(h)}{K_j(h)} \Bigr| \Bigr)\Bigr] \leq 8\exp \Bigl(  \frac{t^2}{2} \sum_{j=1}^n b_j^2 \Bigr). 
\end{equation}
Let us consider an arbitrary $\beta>0$. Then by exponential Chebyshev's inequality (cf. \cite{JakSztencel}, p. 96) and \eqref{eq:azuma} with $b_j = K_j(h)$ for $j\in\{1,\ldots,n\}$ and $t=\frac{\beta h^\gamma}{v^2(h)}$, we obtain
\begin{align*}
\mathbb{P}\Bigl(\max_{1\leq k\leq n} \Bigl| \sum_{j=1}^k E^i_j(h) \Bigr|>\beta h^\gamma\Bigr) & \leq \frac{\mathbb{E}\Bigl[\exp\Bigl(t\cdot\max_{1\leq k\leq n} \Bigl| \sum_{j=1}^k E^i_j(h) \Bigr|\Bigr)\Bigr]}{\exp(\beta h^\gamma t)}  \leq  8\exp\Bigl(- \frac{\beta^2 h^{2\gamma}}{2v^2(h)} \Bigr).
\end{align*}
By \eqref{eq:v2} we get
\begin{equation} \label{eq:Ei}
\mathbb{P}\Bigl(\max_{1\leq k\leq n} \Bigl| \sum_{j=1}^k E^i_j(h) \Bigr|>\beta h^{\gamma}\Bigr) \leq 8\exp\Bigl(- \frac{\beta^2}{2C_0^2(b-a)} \Bigr)
\end{equation}
for any $\beta>0$ and for any $i\in\{1,\ldots,d\}$. Let us note that
\begin{align*}
E(h)=\max_{1\leq k\leq n} \sum_{i=1}^d \Bigl|\sum_{j=1}^k E^i_j(h)\Bigr| \leq d\cdot \max_{1\leq i\leq d} \max_{1\leq k\leq n} \Bigl|\sum_{j=1}^k E^i_j(h)\Bigr|
\end{align*}
with probability $1$. Hence, by \eqref{eq:Ei},
\begin{align} 
\mathbb{P}\bigl(E(h)>\beta h^{\gamma}\bigr) & \leq \mathbb{P}\Bigl(\max_{1\leq i\leq d} \max_{1\leq k\leq n} \Bigl|\sum_{j=1}^k E^i_j(h)\Bigr|>\frac{\beta}{d} h^{\gamma}\Bigr) =  \mathbb{P}\Bigl(\bigcup_{i=1}^d \Bigl\{ \max_{1\leq k\leq n} \Bigl|\sum_{j=1}^k E^i_j(h)\Bigr|>\frac{\beta}{d} h^{\gamma}\Bigr\}\Bigr) \notag \\ & \leq \sum_{i=1}^d \mathbb{P}\Bigl( \max_{1\leq k\leq n} \Bigl|\sum_{j=1}^k E^i_j(h)\Bigr|>\frac{\beta}{d} h^{\gamma}  \Bigr) \leq  8d \exp\Bigl(- \frac{\beta^2}{2C_0^2d^2(b-a)} \Bigr).\label{eq:E}
\end{align}
\medskip

By \eqref{eq:assump} and \eqref{eq:E}, the following inequality holds for $\xi>C_2+C_3$:
\begin{align*}
\mathbb{P} \Bigl( \sup_{a\leq t\leq b} \|z(t)-\mathcal{A}_n(t)\| > \xi \max\bigl\{h^{\gamma}, \delta\bigr\} \Bigr) & \leq \mathbb{P} \Bigl( E(h) > \frac{\xi-C_2-C_3}{C_1} \max\bigl\{h^{\gamma}, \delta\bigr\} \Bigr)\\ & \leq \mathbb{P} \Bigl( E(h) > \frac{\xi-C_2-C_3}{C_1} h^{\gamma} \Bigr)\\  & \leq  8d\exp\left(- \frac{\xi^2}{2C_0^2C_1^2d^2(b-a)}\cdot\Bigl( \frac{\xi-C_2-C_3}{\xi} \Bigr)^2 \right).
\end{align*}

For sufficiently large $\xi$ we have
$$ \Bigl( \frac{\xi-C_2-C_3}{\xi} \Bigr)^2 \geq \frac12 \ \text{ and } \  8d \leq  \exp\left( \frac{1}{8C_0^2C_1^2d^2(b-a)} \xi^2 \right).$$
As a result,
\begin{equation*}
\mathbb{P} \Bigl( \sup_{a\leq t\leq b} \|z(t)-\mathcal{A}_n(t)\| > \xi \max\bigl\{h^{\gamma}, \delta\bigr\} \Bigr) \leq \exp\left(- \frac{1}{8C_0^2C_1^2d^2(b-a)} \xi^2 \right)
\end{equation*}
for sufficiently big $\xi$, which completes the proof.
\end{proof}

\begin{remark} \label{remark1}
If the assumptions of Theorem \ref{general_theorem} are satisfied with $\gamma>\frac12$, then $\mathcal{A}_n(t) \to z(t)$ with probability $1$, uniformly on $[a,b]$, as $n\to\infty$ and $\delta\to 0$.
In fact, 
$$\max_{1\leq k\leq n} \Bigl\|\sum_{j=1}^k E_j(h)\Bigr\| \leq \sum_{j=1}^n \| E_j(h)\| \leq C_0(b-a)h^{\gamma-1/2}$$ and by \eqref{eq:assump} we obtain 
$$\sup_{a\leq t\leq b}\bigl\|z(t)-\mathcal{A}_n(t)\bigr\| \leq C_0C_1(b-a)h^{\gamma-1/2}+C_2h^\gamma+C_3\delta \longrightarrow 0$$ with probability $1$ when $n\to\infty$ and $\delta\to 0$. 

The property pointed out in this remark implies that $\mathcal{A}_n(t)$ is a strongly consistent estimator of $z(t)$ for each $t\in [a,b]$. 
\end{remark}

In the next three corollaries we apply Theorem \ref{general_theorem} to randomized explicit and implicit Euler schemes, and to the randomized two-stage Runge-Kutta scheme.

\begin{corollary} \label{cor:explicit}
There exist constants $c_1, c_2 >0$ dependent only on $a,b,d,K,L$, such that for all $(\eta,f)\in F^\varrho_{R_{EE}}$, $\bigl(\tilde \eta, \tilde f\bigr)\in V^{EE}_{(\eta,f)}(\delta)$, $n\in\mathbb{N}$, $n\geq \lfloor b-a \rfloor +1$, and for all $\xi \geq c_1$, the randomized explicit Euler scheme satisfies
$$\mathbb{P} \Bigl( \sup_{a\leq t\leq b}\bigl\|z(t)-\bar l^{EE}(t)\bigr\| >\xi \max\bigl\{h^{\min\{\varrho+1/2, 1\}}, \delta\bigr\} \Bigr) \leq  \exp (-c_2\xi^2).$$
\end{corollary}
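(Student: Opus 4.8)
The plan is to deduce the corollary directly from Theorem \ref{general_theorem} by exhibiting the error decomposition \eqref{eq:assump} for the randomized explicit Euler scheme with the correct data. Concretely, I would set $\mathcal{A}_n = \bar l^{EE}$, take $\gamma = \min\{\varrho+1/2, 1\}$, and recover from the error analysis of \cite{randEuler} a bound of the form \eqref{eq:assump}; the only genuine work is to identify the martingale difference sequence $\bigl(E_j(h)\bigr)$ and check the three bullet hypotheses, after which the constants $c_1 = C_2 + C_3$ and $c_2 = \bigl(8C_0^2C_1^2 d^2(b-a)\bigr)^{-1}$ are read off from the proof of the theorem and inherit their dependence on $a,b,d,K,L$ from $C_0,C_1,C_2,C_3$.

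First I would pin down $E_j(h)$. The randomization enters only through the nodes $\theta_j = t_{j-1} + \tau_j h$, and the key algebraic fact is that $h f(\theta_j, x)$ is an unbiased estimator of $\int_{t_{j-1}}^{t_j} f(s,x)\rd s$ when $\tau_j\sim U(0,1)$ is independent of the past. Thus, following \cite{randEuler}, I would work with the increment
\[
E_j(h) = h f(\theta_j, \bar W^{j-1}) - \int_{t_{j-1}}^{t_j} f(s, \bar W^{j-1})\rd s,
\]
which is $\bigl(\mathcal{F}_j\bigr)$-adapted (since $\bar W^{j-1}$ is $\mathcal{F}_{j-1}$-measurable) and satisfies $\mathbb{E}\bigl(E_j(h)\mid\mathcal{F}_{j-1}\bigr) = 0$, giving the martingale-difference property. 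For the increment bound I would use (A3): rewriting $E_j(h) = \int_{t_{j-1}}^{t_j}\bigl(f(\theta_j,\bar W^{j-1}) - f(s,\bar W^{j-1})\bigr)\rd s$ and invoking the a priori stability estimates from \cite{randEuler} that keep $\bar W^{j-1}\in B(\eta, R_{EE})$, Hölder continuity in time yields $\|E_j(h)\| \leq L h^{1+\varrho}$. The perturbation terms $h\tilde\delta_f(\theta_j,\bar W^{j-1})$ and the initial error $\tilde\eta-\eta$ are absorbed into the $C_3\delta$ term via \eqref{eq:K-EE} together with the stability bound on the iterates, while the $O(h)$ discretization bias of the Euler step and the linear interpolation feed the $C_2 h^{\gamma}$ term; the $\max_k$ structure and a factor $C_1$ of order $e^{L(b-a)}$ emerge from the discrete Gronwall estimate used to propagate the local errors, which is where (A4) is needed.

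The step that requires care — and the reason for the hypothesis $n \geq \lfloor b-a\rfloor + 1$ — is matching the single exponent $\gamma$ of Theorem \ref{general_theorem} to the two distinct rates produced by the scheme. Since $n \geq \lfloor b-a\rfloor + 1$ forces $h = (b-a)/n < 1$, higher powers of $h$ dominate lower ones, so I would verify the two required inequalities $h^{1+\varrho}\leq h^{\gamma+1/2}$ (the increment bound, equivalent to $\gamma \leq \varrho + 1/2$) and $h \leq h^{\gamma}$ (the deterministic bias, equivalent to $\gamma \leq 1$). Both hold precisely because $\gamma = \min\{\varrho+1/2,1\}$ is the largest exponent compatible with both constraints, which is exactly why the rate saturates at $h$ once $\varrho > 1/2$. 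With the three hypotheses confirmed for this $\gamma$, Theorem \ref{general_theorem} applies verbatim and returns \eqref{eq:theorem} with threshold $\max\{h^{\min\{\varrho+1/2,1\}},\delta\}$, which is the claim. The main obstacle is therefore not the probabilistic estimate, which is entirely discharged by the theorem, but the bookkeeping needed to extract the martingale decomposition from \cite{randEuler} with constants independent of $n$ and to confirm that the increments live at the scale $h^{\gamma+1/2}$ rather than merely $h^{1+\varrho}$.
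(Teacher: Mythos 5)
Your argument is correct and reaches the conclusion by the same overall strategy as the paper (feed an error decomposition of the form \eqref{eq:assump} into Theorem \ref{general_theorem} with $\gamma=\min\{\varrho+1/2,1\}$), but it uses a genuinely different martingale difference sequence. The paper takes $E_j(h)=\int_{t_{j-1}}^{t_j}z'(s)\,\mathrm{d}s-hz'(\theta_j)$, built from the exact solution: this random variable is a function of $\theta_j$ alone, so it is independent of $\mathcal{F}_{j-1}$, the conditional-mean-zero property is immediate, and the increment bound $\|E_j(h)\|\leq C_0h^{\varrho+1}$ follows from the H\"older continuity of $z'$ (Lemma 2(ii) in \cite{randEuler}) without any reference to the iterates. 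The noise is then handled separately by comparing $\bar W^j$ with the exact-information iterates $W^j$ (Fact 2 in \cite{randEuler}), so that inequalities (14)--(20) of \cite{randEuler} can be reused verbatim. Your choice $E_j(h)=hf(\theta_j,\bar W^{j-1})-\int_{t_{j-1}}^{t_j}f(s,\bar W^{j-1})\,\mathrm{d}s$ is a properly adapted martingale difference sequence centered at the noisy iterates; it is equally valid, and it lets you run a single discrete Gronwall argument on $z(t_j)-\bar W^j$ directly, avoiding the intermediate comparison $\max_j\|\bar W^j-W^j\|$. The price is that the increment bound via (A3) and the Lipschitz step via (A4) both require the a priori confinement $\bar W^{j-1}\in B(\eta,R_{EE})$, which you correctly flag as an input from \cite{randEuler}; this is exactly the role of the constant $R_{EE}$ in \eqref{eq:R-EE}. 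Your verification that $h<1$ (from $n\geq\lfloor b-a\rfloor+1$) reconciles the exponents $h^{1+\varrho}\leq h^{\gamma+1/2}$ and $h\leq h^{\gamma}$ matches the paper. One small imprecision: in the proof of Theorem \ref{general_theorem} the threshold $c_1$ is not $C_2+C_3$ but any value large enough that additionally $\bigl((\xi-C_2-C_3)/\xi\bigr)^2\geq\tfrac12$ and $8d\leq\exp\bigl(\xi^2/(8C_0^2C_1^2d^2(b-a))\bigr)$; this does not affect the corollary, which only asserts existence of $c_1$.
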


\begin{proof}
Let $$E_j(h) = \int\limits_{t_{j-1}}^{t_j} z'(s)\,\mathrm{d}s - hz'(\theta_j) \ \ \text{for} \ \ j\in\{1,\ldots,n\}.$$

Let us note that $\mathbb{E} \bigl(  E_j(h) | \mathcal{F}_{j-1} \bigr) = \mathbb{E}\bigl(E_j(h)\bigr) = 0$ and $\|E_j(h)\| \leq C_0h^{\min\{\varrho+1/2, 1\}+1/2}$ with probability $1$ for all $j\in\{1,\ldots,n\}$, where $C_0=C_0(a,b,K,L)>0$. To prove the last inequality, we use Lemma 2(ii) from \cite{randEuler}. Specifically, for each $j\in\{1,\ldots,n\}$ we have
\begin{equation} \label{eq:Ej}
  \|E_j(h)\|  \leq \int\limits_{t_{j-1}}^{t_j} \bigl\|z'(s) - z'(\theta_j)\bigr\|\,\mathrm{d}s \leq C_0\int\limits_{t_{j-1}}^{t_j} \bigl|s - \theta_j |^\varrho\,\mathrm{d}s \leq C_0\int\limits_{t_{j-1}}^{t_j} h^\varrho\,\mathrm{d}s = C_0 h^{\varrho+1}.  
\end{equation}

From (14), (15) and arguments between (15) and (16) in \cite{randEuler} we obtain
\begin{equation} \label{eq:nier1}
\sup_{a\leq t\leq b} \|z(t)-\bar l^{EE}(t)\| \leq \alpha_1 h^{\varrho+1} + \max_{1\leq j\leq n} \|z(t_j)-W^j\|+\max_{0\leq j\leq n} \|\bar W^j-W^j\|
\end{equation}
with probability $1$ for some $\alpha_1=\alpha_1(a,b,d,K,L)>0$. By $(W^j)_{j=0}^n$ we denote approximations produced by the explicit Euler scheme under exact information (i.e. when $\delta=0$).  Moreover,
\begin{equation}\label{eq:nier2}
\max_{1\leq j\leq n} \|z(t_j)-W^j\| \leq e^{L(b-a)} \cdot \Bigl( \max_{1\leq k\leq n} \Bigl\| \sum_{j=1}^k E_j(h) \Bigr\| + \max_{1\leq k\leq n} \bigl\| S_2^k \bigr\| \Bigr)
\end{equation}
with probability $1$, which can be shown in the same fashion as inequality (18) in \cite{randEuler}. Furthermore, by (20) in \cite{randEuler}, we have 
\begin{equation}\label{eq:nier3}
\max_{1\leq k\leq n} \bigl\| S_2^k \bigr\| \leq \alpha_2 h
\end{equation}
with probability $1$ for some $\alpha_2=\alpha_2(a,b,K,L)>0$. Fact 2 in \cite{randEuler} implies that  there exists $\alpha_3=\alpha_3(a,b,K,L)>0$ such that
\begin{equation}\label{eq:nier4}
\max_{0\leq j\leq n} \|\bar W^j-W^j\| \leq \alpha_3\delta
\end{equation}
with probability $1$. By \eqref{eq:nier1}, \eqref{eq:nier2}, \eqref{eq:nier3} and \eqref{eq:nier4} we obtain 
\begin{align*}
\sup_{a\leq t\leq b} \|z(t)-\bar l^{EE}(t)\| & \leq \alpha_1 h^{\varrho+1} +e^{L(b-a)} \cdot \Bigl( \max_{1\leq k\leq n}\Bigl\| \sum_{j=1}^n E_j(h) \Bigr\| + \alpha_2 h\Bigr) + \alpha_3\delta \\
& \leq e^{L(b-a)} \max_{1\leq k\leq n}\Bigl\| \sum_{j=1}^n E_j(h) \Bigr\| + \bigl(\alpha_1 + \alpha_2e^{L(b-a)}\bigr)h^{\min\{\varrho+1/2, 1\}} + \alpha_3\delta
\end{align*}
with probability $1$. The desired claim follows from Theorem \ref{general_theorem}.
\end{proof}

\begin{corollary} \label{cor:implicit}
There exist constants $c_1, c_2 >0$ dependent only on $a,b,d,K,L$, such that for all $(\eta,f)\in F^\varrho_{\infty}$, $\bigl(\tilde \eta, \tilde f\bigr)\in V^{IE}_{(\eta,f)}(\delta)$, $n\in\mathbb{N}$, $n\geq \lfloor b-a \rfloor +1$ such that $h(K+1)\leq\frac12$, $hL\leq\frac12$, and for all $\xi \geq c_1$, the randomized implicit Euler scheme satisfies
$$\mathbb{P} \Bigl( \sup_{a\leq t\leq b}\bigl\|z(t)-\bar l^{IE}(t)\bigr\| >\xi \max\bigl\{h^{\min\{\varrho+1/2, 1\}}, \delta\bigr\} \Bigr) \leq  \exp (-c_2\xi^2).$$
\end{corollary}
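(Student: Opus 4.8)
The plan is to run the argument of Corollary~\ref{cor:explicit} essentially verbatim, exploiting the fact that the martingale increments are a feature of the exact solution and the random nodes, not of the discretization. I would again put
$$E_j(h) = \int_{t_{j-1}}^{t_j} z'(s)\,\mathrm{d}s - h z'(\theta_j), \qquad j\in\{1,\ldots,n\}.$$
Since $\theta_j = t_{j-1}+\tau_j h$ with $\tau_j\sim U(0,1)$ independent of $\mathcal{F}_{j-1}$, the identity $\mathbb{E}(E_j(h)\mid\mathcal{F}_{j-1}) = \mathbb{E}(E_j(h)) = 0$ holds exactly as in Corollary~\ref{cor:explicit}, and the pointwise bound $\|E_j(h)\|\leq C_0 h^{\min\{\varrho+1/2,1\}+1/2}$ follows from the H\"older regularity of $z'$ through Lemma~2(ii) of \cite{randEuler} by the same computation as in~\eqref{eq:Ej}. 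None of this uses the implicitness of the scheme, so the three hypotheses of Theorem~\ref{general_theorem} on $(E_j(h))_{j=1}^n$ are satisfied with $\gamma = \min\{\varrho+1/2,1\}$.

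The only genuinely new ingredient is the almost-sure error decomposition~\eqref{eq:assump}, and this is where the restrictions $h(K+1)\leq\frac12$ and $hL\leq\frac12$ are used. I would invoke the implicit counterparts of \eqref{eq:nier1}--\eqref{eq:nier4} established in \cite{randEuler}: an interpolation estimate bounding $\sup_t\|z(t)-\bar l^{IE}(t)\|$ by a multiple of $h^{\varrho+1}$ plus the nodal errors $\max_j\|z(t_j)-\bar U^j\|$ taken under exact information and $\max_j\|\bar U^j-U^j\|$, where $(U^j)_{j=0}^n$ denotes the exact-information implicit iterates; a Gronwall-type propagation bound writing $\max_k\|z(t_k)-U^k\|$ in terms of $\max_k\|\sum_{j=1}^k E_j(h)\|$ plus an auxiliary residual $\max_k\|S_2^k\| = O(h)$; and a stability estimate $\max_j\|\bar U^j-U^j\|\leq\alpha_3\delta$ for the noisy perturbation. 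The step-size restrictions guarantee that the map $u\mapsto U^{j-1}+hf(\theta_j,u)$ (and its noisy analogue) is a contraction with factor $hL\leq\frac12$, so the implicit step is well defined and the rearrangement of the error recursion produces a geometric factor $(1-hL)^{-1}\leq 2$; together with the a~priori bound on the iterates furnished by $h(K+1)\leq\frac12$, this keeps all constants independent of $n$.

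Assembling these pieces gives
\begin{equation*}
\sup_{a\leq t\leq b}\|z(t)-\bar l^{IE}(t)\| \leq C_1\max_{1\leq k\leq n}\Bigl\|\sum_{j=1}^k E_j(h)\Bigr\| + C_2 h^{\min\{\varrho+1/2,1\}} + C_3\delta
\end{equation*}
almost surely, with $C_1,C_2,C_3>0$ depending only on $a,b,d,K,L$, after which Theorem~\ref{general_theorem} delivers the asserted exponential tail.

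I expect the propagation step to be the real obstacle. In the explicit case the nodal error obeys a plain forward recursion and Gronwall applies immediately; in the implicit case the unknown sits on both sides of the update, so one must first absorb the $hf(\theta_j,U^j)$ term using (A4) and the contraction condition $hL\leq\frac12$, and only then pass to a forward inequality. The delicate point is verifying that this rearrangement leaves the martingale sum $\sum_j E_j(h)$ untouched, rather than entangling it with the implicit correction, and that the deterministic remainder stays $O(h)$; everything else transfers mechanically from the explicit argument and from the cited estimates in \cite{randEuler}.
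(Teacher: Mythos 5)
Your proposal follows essentially the same route as the paper's proof: the same martingale increments $E_j(h)=\int_{t_{j-1}}^{t_j}z'(s)\,\mathrm{d}s-hz'(\theta_j)$, the same three-term decomposition of $\sup_t\|z(t)-\bar l^{IE}(t)\|$ into an interpolation term, the exact-information nodal error controlled via Gronwall by $\max_k\bigl\|\sum_{j=1}^k E_j(h)\bigr\|$ plus an $O(h)$ residual, and an $O(\delta)$ stability term for $\max_j\|U^j-\bar U^j\|$, followed by an application of Theorem~\ref{general_theorem}; the paper simply delegates the implicit-recursion rearrangement (where $h(K+1)\leq\frac12$ and $hL\leq\frac12$ are used) to the cited inequalities around (28)--(29) and Fact~3 of \cite{randEuler}, exactly as you describe. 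The argument is correct, and your discussion of absorbing the implicit term before applying Gronwall matches what those cited estimates accomplish.
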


\begin{proof}
Let $E_j(h)$ for $j\in\{1,\ldots,n\}$ be defined as in the proof of Corollary \ref{cor:explicit}.
Of course $\mathbb{E} \bigl(  E_j(h) | \mathcal{F}_{j-1} \bigr) = \mathbb{E}\bigl(E_j(h)\bigr) = 0$ and $\|E_j(h)\| \leq C_0h^{\min\{\varrho+1/2, 1\}+1/2}$ with probability $1$ for all $j\in\{1,\ldots,n\}$, where $C_0=C_0(a,b,K,L)>0$. From the proof of Theorem 2 in \cite{randEuler} we conclude that 
\begin{align*}
     \sup_{a\leq t \leq b} & \bigl\| z(t)-\bar l^{IE}(t)\bigr\| \leq \alpha_1 h^{\varrho+1} + \max_{0\leq j\leq n}\|z(t_j)-U^j\|+ \max_{0\leq j\leq n}\|U^j-\bar U^j\| \\ & \leq e^{L(b-a)} \cdot \max_{1\leq k\leq n}\Bigl\|\sum_{j=1}^k E_j(h)\Bigr\|+\alpha_1 h^{\min\{\varrho+1/2, 1\}} + \alpha_2\delta
\end{align*}
with probability $1$ for some $\alpha_1=\alpha_1(a,b,d,K,L)>0$ and $\alpha_2 = \alpha_2 (a,b,K,L)>0$. The first passage above follows from the first two lines of the proof of Theorem 2 in \cite{randEuler} -- compare also with \eqref{eq:nier1} in this paper. The second passage can be justified using two lines before (28), (28), inequality after (28) and before (29), and Fact 3 in \cite{randEuler}. It remains to use Theorem \ref{general_theorem} to conclude the proof.
\end{proof}

\begin{corollary} \label{cor:rk}
There exist constants $c_1, c_2 >0$ dependent only on $a,b,d,K,L$, such that for all $(\eta,f)\in F^\varrho_{R_{RK}}$, $\bigl(\tilde \eta, \tilde f\bigr)\in V^{RK}_{(\eta,f)}(\delta)$, $n\in\mathbb{N}$, $n\geq \lfloor b-a \rfloor +1$, and for all $\xi \geq c_1$, the randomized two-stage Runge-Kutta scheme satisfies
$$\mathbb{P} \Bigl( \sup_{a\leq t\leq b}\bigl\|z(t)-\bar l^{RK}(t)\bigr\| >\xi \max\bigl\{h^{\varrho+1/2}, \delta\bigr\} \Bigr) \leq  \exp (-c_2\xi^2).$$
\end{corollary}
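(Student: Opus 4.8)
The plan is to follow the template of the proofs of Corollaries~\ref{cor:explicit} and \ref{cor:implicit}: I would exhibit a martingale difference sequence $E_j(h)$, check that it meets the hypotheses of Theorem~\ref{general_theorem} with $\gamma=\varrho+1/2$, extract from \cite{randRK} an error decomposition of the form \eqref{eq:assump}, and then invoke Theorem~\ref{general_theorem}. The bulk of the work is isolating the right $E_j(h)$ and verifying that the deterministic (non-martingale) remainder for the two-stage scheme is of higher order than for the Euler schemes, which is what permits the full rate $\varrho+1/2$ rather than $\min\{\varrho+1/2,1\}$.

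First I would take, exactly as in the Euler corollaries,
$$E_j(h) = \int_{t_{j-1}}^{t_j} z'(s)\,\mathrm{d}s - hz'(\theta_j), \qquad j\in\{1,\ldots,n\}.$$
Since $z'(s)=f(s,z(s))$ is deterministic and $\theta_j=t_{j-1}+\tau_j h$ with $\tau_j\sim U(0,1)$ independent of $\mathcal{F}_{j-1}$, the uniformity of $\theta_j$ on $[t_{j-1},t_j]$ gives $\mathbb{E}\bigl(hz'(\theta_j)\mid\mathcal{F}_{j-1}\bigr)=\int_{t_{j-1}}^{t_j} z'(s)\,\mathrm{d}s$, whence $\mathbb{E}\bigl(E_j(h)\mid\mathcal{F}_{j-1}\bigr)=0$. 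The bound $\|E_j(h)\|\leq C_0 h^{\varrho+1}$ follows verbatim from the computation \eqref{eq:Ej} using assumption (A3); with $\gamma=\varrho+1/2$ this reads $\|E_j(h)\|\leq C_0 h^{\gamma+1/2}$, so the second hypothesis of Theorem~\ref{general_theorem} is met.

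The central step is the decomposition. Following \cite{randRK}, I would split $\sup_{a\le t\le b}\|z(t)-\bar l^{RK}(t)\|$ into an interpolation error (of higher order in $h$), the exact-information error $\max_{0\le j\le n}\|z(t_j)-V^j\|$, where $(V^j)$ is the scheme \eqref{def_rand_RK_1} run with $\delta=0$, and the perturbation error $\max_{0\le j\le n}\|V^j-\bar V^j\|$, which is $\mathcal{O}(\delta)$ by the stability estimates of \cite{randRK}. For the exact-information term one unfolds the local recursion: when $V^{j-1}\approx z(t_{j-1})$, the inner stage $\bar V_\tau^j$ approximates $z(\theta_j)$ to order $h^{1+\varrho}$, so $hf(\theta_j,\bar V_\tau^j)=hz'(\theta_j)+\mathcal{O}(h^{2+\varrho})$ and the local increment differs from the exact increment $\int_{t_{j-1}}^{t_j}z'(s)\,\mathrm{d}s$ by $E_j(h)+\mathcal{O}(h^{2+\varrho})$. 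A discrete Gronwall argument, as in \cite{randRK}, then summing the $\mathcal{O}(h^{2+\varrho})$ remainders over the $n=\mathcal{O}(h^{-1})$ steps, yields a bound of the form
$$\max_{0\le j\le n}\|z(t_j)-V^j\|\le C_1\max_{1\le k\le n}\Bigl\|\sum_{j=1}^k E_j(h)\Bigr\|+C_2 h^{\varrho+1}.$$

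The main obstacle is precisely this deterministic bias estimate. It is here that the two-stage structure is indispensable: it raises the accuracy of the inner stage from the $\mathcal{O}(h)$ of the Euler schemes to $\mathcal{O}(h^{1+\varrho})$, so that the non-martingale remainder is $\mathcal{O}(h^{\varrho+1})\le\mathcal{O}(h^{\varrho+1/2})$ and therefore does not cap the rate at $\min\{\varrho+1/2,1\}$. Once the three pieces are combined into the form \eqref{eq:assump} with $\gamma=\varrho+1/2$ and constants $C_1,C_2,C_3>0$ depending only on $a,b,d,K,L$, the claim follows at once from Theorem~\ref{general_theorem}.
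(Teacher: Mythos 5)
Your proposal is correct and follows essentially the same route as the paper: the identical choice of $E_j(h)$, the same three-way split into interpolation error, exact-information error $\max_{0\le j\le n}\|z(t_j)-V^j\|$, and perturbation error $\max_{0\le j\le n}\|V^j-\bar V^j\|=\mathcal{O}(\delta)$, followed by discrete Gronwall and an appeal to Theorem~\ref{general_theorem} with $\gamma=\varrho+1/2$. The paper simply delegates the local-error and stability estimates to specific inequalities in \cite{randRK}, whereas you sketch the underlying reasoning (the inner stage accuracy of order $h^{1+\varrho}$ driving the $\mathcal{O}(h^{2+\varrho})$ local bias); both yield the same decomposition \eqref{eq:assump}.
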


\begin{proof}
Let $E_j(h)$ for $j\in\{1,\ldots,n\}$ be defined as in the proof of Corollary \ref{cor:explicit}.
We have $\mathbb{E} \bigl(  E_j(h) | \mathcal{F}_{j-1} \bigr) = \mathbb{E}\bigl(E_j(h)\bigr) = 0$ and $\|E_j(h)\| \leq C_0h^{\varrho+1}$ with probability $1$ for all $j\in\{1,\ldots,n\}$, where $C_0=C_0(a,b,K,L)>0$, cf. \eqref{eq:Ej} in this paper and (22) in \cite{randRK}. We have 
\begin{align*} 
     \sup_{a\leq t \leq b} \bigl\| z(t)-\bar l^{RK}(t)\bigr\| & \leq \alpha_1 h^{\varrho+1} + 3\max_{0\leq j\leq n}\|z(t_j)-V^j\|+ 3\max_{0\leq j\leq n}\|V^j-\bar V^j\| \\ & \leq \alpha_1 h^{\varrho+1/2} + \alpha_2 \Bigl( \max_{1\leq k\leq n}\Bigl\| \sum_{j=1}^n E_j(h) \Bigr\| +  h^{\varrho+1/2}\Bigr) + \alpha_3\delta
\end{align*}
with probability $1$, where constants $\alpha_1,\alpha_2,\alpha_3>0$ depend on $a,b,d,K,L$. For justification see (37), (38), (56) in \cite{randRK} for the first passage, and (45), (47) in \cite{randRK} combined with discrete Gronwall's lemma for the second passage. The desired claim follows from Theorem \ref{general_theorem}.
\end{proof}

\begin{remark}
We note that the randomized two-stage Runge-Kutta scheme considered in \cite{randRK} is the special case of randomized Taylor schemes considered in \cite{HeinMilla}. However, in \cite{randRK} and in Corollary \ref{cor:rk}, we assume only local Lipschitz and H\"older conditions, see Assumptions (A3) and (A4), and we allow noisy evaluations of $f$. Thus, Corollary \ref{cor:rk} is not a special case of Proposition 2 in \cite{HeinMilla}, where global Lipschitz and H\"older conditions as well as exact information were assumed.
\end{remark}

\section{Confidence regions and numerical experiments}

\subsection{Construction of the confidence region}
The following corollary is an immediate consequence of Theorem \ref{general_theorem}. By a suitable choice of $\xi$ in \eqref{eq:theorem} one may construct the confidence region for the exact solution of \eqref{eq:ode}.

\begin{corollary} \label{conf_reg}
Let $c_1,c_2$ be the constants which have appeared in Theorem \ref{general_theorem} and let $$\xi(\varepsilon) = \Bigl( \frac{-\ln\varepsilon}{c_2} \Bigr)^{\frac12}$$ for $\varepsilon \in (0,1)$. Under the assumptions of Theorem \ref{general_theorem} it holds that
\begin{equation*}
\mathbb{P} \Bigl( \sup_{a\leq t\leq b}\bigl\|z(t)-\mathcal{A}_n(t)\bigr\| \leq \xi(\varepsilon) \max\bigl\{h^\gamma, \delta\bigr\} \Bigr) \geq 1-\varepsilon.
\end{equation*}
for all $(\eta,f)\in F$, $\bigl(\tilde \eta, \tilde f\bigr)\in V_{(\eta,f)}(\delta)$, $n\in\mathbb{N}$, $n\geq n_0$ and $0< \varepsilon\leq \exp(-c_1^2c_2)$.
\end{corollary}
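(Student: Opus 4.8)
The plan is to apply Theorem \ref{general_theorem} directly with a suitably chosen value of the free parameter $\xi$ and then pass to the complementary event. The entire content of the corollary is the algebraic inversion of the tail bound \eqref{eq:theorem}; the only point requiring genuine care is to confirm that the prescribed $\xi(\varepsilon)$ lies in the admissible range $[c_1,\infty)$ on which the theorem is valid.

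First I would set $\xi = \xi(\varepsilon) = \bigl(-\ln\varepsilon/c_2\bigr)^{1/2}$ and check that this is a legitimate choice. Since $\varepsilon \in (0,1)$ we have $-\ln\varepsilon > 0$, so $\xi(\varepsilon)$ is a well-defined positive real. The admissibility condition $\xi(\varepsilon) \geq c_1$ is equivalent, after squaring and multiplying through by $c_2 > 0$, to $-\ln\varepsilon \geq c_1^2 c_2$, that is, to $\varepsilon \leq \exp(-c_1^2 c_2)$, which is exactly the upper bound imposed on $\varepsilon$ in the statement. Hence Theorem \ref{general_theorem} may be invoked with this particular $\xi$.

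Next I would note that, by the very definition of $\xi(\varepsilon)$, we have $c_2\,\xi(\varepsilon)^2 = -\ln\varepsilon$, so that $\exp\bigl(-c_2\,\xi(\varepsilon)^2\bigr) = \varepsilon$. Substituting $\xi(\varepsilon)$ into \eqref{eq:theorem} therefore yields
$$\mathbb{P}\Bigl( \sup_{a\leq t\leq b}\bigl\|z(t)-\mathcal{A}_n(t)\bigr\| > \xi(\varepsilon) \max\bigl\{h^\gamma,\delta\bigr\} \Bigr) \leq \varepsilon.$$
Taking the complement of the event inside the probability gives the claimed lower bound $1-\varepsilon$. This holds uniformly over all $(\eta,f)\in F$, $\bigl(\tilde\eta,\tilde f\bigr)\in V_{(\eta,f)}(\delta)$ and $n \geq n_0$, because the constants $c_1,c_2$ supplied by Theorem \ref{general_theorem} are independent of these data.

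I do not expect any real obstacle here: the corollary is a restatement of \eqref{eq:theorem} obtained by solving $\exp(-c_2\xi^2)=\varepsilon$ for $\xi$ and then complementing. The only thing to track carefully is the range of $\varepsilon$ for which the resulting $\xi(\varepsilon)$ stays above the threshold $c_1$, and this bookkeeping is precisely what produces the condition $\varepsilon \leq \exp(-c_1^2 c_2)$.
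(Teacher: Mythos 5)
Your proposal is correct and matches the paper's intent exactly: the paper presents this corollary as an immediate consequence of Theorem \ref{general_theorem}, obtained precisely by choosing $\xi = \xi(\varepsilon)$ so that $\exp(-c_2\xi^2)=\varepsilon$ and complementing, with the condition $\varepsilon\leq\exp(-c_1^2c_2)$ guaranteeing $\xi(\varepsilon)\geq c_1$. Your write-up simply makes explicit the bookkeeping the paper leaves to the reader.
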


\begin{remark} \label{remark3}
With $\max\{h,\delta\}$ approaching $0$, the confidence region for $z$ constructed in Corollary \ref{conf_reg} tightens, whereas the confidence level $1-\varepsilon$ remains unchanged. Hence, this confidence region is uniform with respect to $\max\{h,\delta\}$. 
\end{remark}

\subsection{Numerical experiments}
Let us consider the following test problems:
\begin{equation} \label{eq:A}
\left\{ \begin{array}{ll}
	z'(t)= 2tz(t), \ t\in [0,1], \\
	z(0) = 1
\end{array}\right. \tag{$A$}
\end{equation}
and
\begin{equation} \label{eq:B}
\left\{ \begin{array}{ll}
	z'(t)= \cos\bigl(z^2(t)\bigr), \ t\in [0,1], \\
	z(0) = 1.
\end{array}\right. \tag{$B$}
\end{equation}
The exact solution of \eqref{eq:A} is $z_A(t)=\exp(t^2)$ for $t\in [0,1]$. By $z_B$ we denote the exact solution of \eqref{eq:B}. Let $\bar l^{S,X}_n$ be the approximate solutions of test problem $X\in\{A,B\}$, generated by scheme $S\in\{EE,RK\}$ (the randomized explicit Euler scheme or the randomized two-stage Runge-Kutta scheme, respectively) with $n$ steps. We note that $\varrho=\frac{3}{2}$ for both test problems, cf. assumption (A3). Thus, $\gamma^{RK}=\frac32$ and $\gamma^{EE}=1$, cf. \eqref{eq:theorem}, Corollary \ref{cor:explicit} and Corollary \ref{cor:rk}.

We consider the setting where the class of IVPs is restricted to the test problem ($A$ or $B$), and the class of algorithms is restricted to the randomized explicit Euler scheme or the randomized two-stage Runge-Kutta scheme. By Corollary \ref{conf_reg}, 
\begin{equation} \label{eq:conf_int}
\mathbb{P} \Bigl( \sup_{a\leq t\leq b}\bigl\|z_X(t)-\bar l_n^{S,X}(t)\bigr\| \leq \xi^{S,X}(\varepsilon) \cdot \max\bigl\{n^{-\gamma^S},\delta\bigr\} \Bigr) \geq 1-\varepsilon
\end{equation}
for $0< \varepsilon\leq \exp(-c_1^2c_2)$, where $X\in \{A,B\}$, $S\in\{EE,RK\}$, $\displaystyle \xi^{S,X}(\varepsilon) = \Bigl( \frac{-\ln\varepsilon}{c_2} \Bigr)^{\frac12}$ and $c_1, c_2$ are some positive constants, dependent on $S$ and $X$. 

Since the constants $c_1, c_2$ are not known, we propose the following approach to illustrate the property \eqref{eq:conf_int}. Let $\xi_{\varepsilon,n, \delta}^{S,X}$ be given by the following equation: 
\begin{equation} \label{eq:xi_def}
\mathbb{P} \Bigl( \sup_{a\leq t\leq b}\bigl\|z_X(t)-\bar l_{n}^{S,X}(t)\bigr\| \leq \xi_{\varepsilon,n, \delta}^{S,X} \cdot \max\bigl\{n^{-\gamma^{S}},\delta\bigr\} \Bigr) = 1-\varepsilon.
\end{equation}
In practice, we perform $N$ Monte Carlo simulations of $\displaystyle \sup_{a\leq t\leq b}\bigl\|z_X(t)-\bar l_{n}^{S,X}(t)\bigr\|$, sort the obtained values in the increasing order: $r_{1:N}\leq \ldots\leq r_{N:N}$, and consider the following estimator:

$$\widehat{\xi_{\varepsilon,n,\delta}^{S,X}} = r_{\left\lceil (1-\varepsilon)N \right\rceil:N} \cdot \Bigl( \max\bigl\{n^{-\gamma^{S}},\delta\bigr\}\Bigr)^{-1}.$$

In general, $\xi_{\varepsilon,n,\delta}^{S,X}$ is only a~lower bound for $\xi^{S,X}(\varepsilon)$. However, as we will see, some patterns can be noticed in the behaviour of $\widehat{\xi_{\varepsilon,n,\delta}^{S,X}}$ for different choices of $n$ and $\delta$ (at least for the considered test problems). Thus, with a certain degree of caution, we may provide estimates of $\xi^{S,X}(\varepsilon)$ for $S\in\{EE,RK\}$ and $X\in\{A,B\}$. 

\begin{table}[H]
\centering
\begin{tabular}{  r | c c c c c c c } 
$	n	$ & &  $	\delta=0	$ & $	\delta(n)=n^{-1.1}	$ & $	\delta(n)=n^{-1}	$ & $	\delta(n)=n^{-0.9}	$ & $	\delta=2\cdot 10^{-3}	$ & $	\delta=10^{-4}	$ \\ \hline
$	10	$ & &  $	2.29	$ & $	2.82	$ & $	3.04	$ & $	2.64	$ & $	2.30	$ & $	2.29	$ \\
$	20	$ & &  $	2.23	$ & $	2.58	$ & $	2.79	$ & $	2.29	$ & $	2.24	$ & $	2.23	$ \\
$	50	$ & &  $	2.12	$ & $	2.31	$ & $	2.48	$ & $	1.86	$ & $	2.12	$ & $	2.12	$ \\
$	100	$ & &  $	2.04	$ & $	2.17	$ & $	2.30	$ & $	1.60	$ & $	2.06	$ & $	2.04	$ \\
$	200	$ & &  $	1.98	$ & $	2.06	$ & $	2.17	$ & $	1.39	$ & $	2.02	$ & $	1.98	$ \\
$	500	$ & &  $	1.92	$ & $	1.97	$ & $	2.04	$ & $	1.18	$ & $	2.04	$ & $	1.92	$ \\
$	1\,000	$ & &  $	1.89	$ & $	1.92	$ & $	1.97	$ & $	1.06	$ & $	1.05	$ & $	1.89	$ \\
$	2\,000	$ & &  $	1.87	$ & $	1.89	$ & $	1.93	$ & $	0.95	$ & $	0.56	$ & $	1.87	$ \\
$	5\,000	$ & &  $	1.85	$ & $	1.86	$ & $	1.89	$ & $	0.84	$ & $	0.25	$ & $	1.86	$
\end{tabular}
\caption{$\widehat{\xi_{\varepsilon,n,\delta}^{EE,A}}$ for $\varepsilon=0.05$ and different choices of~$n$ and $\delta$; $N=10^5$.}
\label{tab:EE-A}
\end{table}
\begin{table}[H]
\centering
\begin{tabular}{  r | c c c c c c c } 
$	n	$ & & $	\delta=0	$ & $	\delta(n)=n^{-1.1}	$ & $	\delta(n)=n^{-1}	$ & $	\delta(n)=n^{-0.9}	$ & $	\delta=2\cdot 10^{-3}	$ & $	\delta=10^{-4}	$ \\ \hline
$	10	$ & & $	0.166	$ & $	0.546	$ & $	0.659	$ & $	0.636	$ & $	0.173	$ & $	0.166	$ \\
$	20	$ & & $	0.159	$ & $	0.411	$ & $	0.512	$ & $	0.482	$ & $	0.170	$ & $	0.160	$ \\
$	50	$ & & $	0.157	$ & $	0.299	$ & $	0.376	$ & $	0.336	$ & $	0.175	$ & $	0.157	$ \\
$	100	$ & & $	0.156	$ & $	0.248	$ & $	0.308	$ & $	0.258	$ & $	0.183	$ & $	0.157	$ \\
$	200	$ & & $	0.155	$ & $	0.216	$ & $	0.261	$ & $	0.203	$ & $	0.195	$ & $	0.157	$ \\
$	500	$ & & $	0.155	$ & $	0.189	$ & $	0.221	$ & $	0.152	$ & $	0.221	$ & $	0.158	$ \\
$	1\,000	$ & & $	0.155	$ & $	0.177	$ & $	0.201	$ & $	0.126	$ & $	0.125	$ & $	0.159	$ \\
$	2\,000	$ & & $	0.155	$ & $	0.169	$ & $	0.187	$ & $	0.106	$ & $	0.074	$ & $	0.161	$ \\
$	5\,000	$ & & $	0.155	$ & $	0.163	$ & $	0.175	$ & $	0.087	$ & $	0.039	$ & $	0.164	$
\end{tabular}
\caption{$\widehat{\xi_{\varepsilon,n,\delta}^{EE,B}}$ for $\varepsilon=0.05$ and different choices of~$n$ and $\delta$; $N=10^5$.}
    \label{tab:EE-B}
\end{table}
\begin{table}[H]
\centering
\begin{tabular}{  r | c c c c c c c} 
$	n	$ & & $	\delta=0	$ & $	\delta(n)=n^{-1.6}	$ & $	\delta(n)=n^{-1.5}	$ & $	\delta(n)=n^{-1.4}	$ & $	\delta=2\cdot 10^{-3}	$ & $	\delta=10^{-4}	$ \\ \hline
$	10	$ & & $	4.78	$ & $	4.82	$ & $	4.86	$ & $	3.87	$ & $	4.79	$ & $	4.78	$ \\
$	20	$ & & $	5.17	$ & $	5.17	$ & $	5.22	$ & $	3.87	$ & $	5.20	$ & $	5.18	$ \\
$	50	$ & & $	5.46	$ & $	5.47	$ & $	5.46	$ & $	3.71	$ & $	5.48	$ & $	5.48	$ \\
$	100	$ & & $	5.60	$ & $	5.63	$ & $	5.59	$ & $	3.55	$ & $	2.79	$ & $	5.59	$ \\
$	200	$ & & $	5.70	$ & $	5.67	$ & $	5.70	$ & $	3.36	$ & $	1.02	$ & $	5.71	$ \\
$	500	$ & & $	5.74	$ & $	5.78	$ & $	5.76	$ & $	3.10	$ & $	0.28	$ & $	5.18	$ \\
$	1\,000	$ & & $	5.81	$ & $	5.79	$ & $	5.79	$ & $	2.89	$ & $	0.12	$ & $	1.83	$ \\
$	2\,000	$ & & $	5.83	$ & $	5.83	$ & $	5.82	$ & $	2.73	$ & $	0.06	$ & $	0.65	$ \\
$	5\,000	$ & & $	5.85	$ & $	5.86	$ & $	5.85	$ & $	2.49	$ & $	0.04	$ & $	0.17	$
\end{tabular}
\caption{$\widehat{\xi_{\varepsilon,n,\delta}^{RK,A}}$ for $\varepsilon=0.05$ and different choices of~$n$ and $\delta$; $N=10^5$.}
    \label{tab:RK-A}
\end{table}
\begin{table}[H]
\centering
\begin{tabular}{  r | c c c c c c c } 
$	n	$ &  &   $	\delta=0	$ & $	\delta(n)=n^{-1.6}	$ & $	\delta(n)=n^{-1.5}	$ & $	\delta(n)=n^{-1.4}	$ & $	\delta=2\cdot 10^{-3}	$ & $	\delta=10^{-4}	$ \\ \hline
$	10	$ &  &  $	0.485	$ & $	0.527	$ & $	0.550	$ & $	0.461	$ & $	0.486	$ & $	0.485	$ \\
$	20	$ &  &  $	0.459	$ & $	0.480	$ & $	0.494	$ & $	0.386	$ & $	0.462	$ & $	0.461	$ \\
$	50	$ &  &  $	0.455	$ & $	0.464	$ & $	0.471	$ & $	0.330	$ & $	0.465	$ & $	0.458	$ \\
$	100	$ &  &  $	0.460	$ & $	0.463	$ & $	0.466	$ & $	0.301	$ & $	0.244	$ & $	0.459	$ \\
$	200	$ &  &  $	0.463	$ & $	0.464	$ & $	0.467	$ & $	0.279	$ & $	0.102	$ & $	0.464	$ \\
$	500	$ &  & $	0.467	$ & $	0.466	$ & $	0.469	$ & $	0.254	$ & $	0.045	$ & $	0.419	$ \\
$	1\,000	$ & &  $	0.469	$ & $	0.469	$ & $	0.469	$ & $	0.237	$ & $	0.030	$ & $	0.150	$ \\
$	2\,000	$ & &  $	0.471	$ & $	0.471	$ & $	0.470	$ & $	0.221	$ & $	0.021	$ & $	0.056	$ \\
$	5\,000	$ & &  $	0.471	$ & $	0.470	$ & $	0.472	$ & $	0.202	$ & $	0.013	$ & $	0.018	$
\end{tabular}
\caption{$\widehat{\xi_{\varepsilon,n,\delta}^{RK,B}}$  for $\varepsilon=0.05$ and different choices of~$n$ and $\delta$; $N=10^5$.}
    \label{tab:RK-B}
\end{table}

In the performed tests, each evaluation of $f$ has been disrupted by a random noise (in cases other than $\delta = 0$). Specifically, $\tilde f(t,x)$ has been simulated as $f(t,x) + (1+|x|)\cdot e$ (for $S=EE$) or $f(t,x) + e$ (for $S=RK$), where $e$ is taken from the uniform distribution on $[-\delta,\delta]$, independently for all noisy evaluations of $f$. Thus, in numerical experiments we have further restricted the class of noise functions specified by \eqref{eq:K-EE} and \eqref{eq:K-RK}.

Moreover, we have assumed that the exact solution $z_B$ of \eqref{eq:B} can be replaced by $l^{RK,B}_{10^8}$ (under exact information). We note that some deviation in estimates of $\xi_{\varepsilon,n,\delta}^{S,X}$ may be attributed to errors inherited from MC simulations. 

Based on the results displayed in Tables \ref{tab:EE-A}--\ref{tab:RK-B}, we make the following observations. 
\begin{itemize}[leftmargin=27pt]
\item Typically $\widehat{\xi_{\varepsilon,n,\delta}^{S,X}}$ is close to $\widehat{\xi_{\varepsilon,n,0}^{S,X}}$ if $\delta\leq n^{-\gamma^S}$. However, the case $\delta=2\cdot 10^{-3}$ in Table \ref{tab:EE-B} does not follow this behaviour. Other exceptions are observed in cases $\delta(n) = n^{-\gamma^S}$ and $\delta(n) = n^{-\gamma^S-0.1}$ for small values of $n$.
\item In case $0\leq \delta\leq n^{-\gamma^S}$, estimates $\widehat{\xi_{\varepsilon,n,\delta}^{S,X}}$ appear to stabilise when $n$ increases. This means that the probability of hitting the confidence region \eqref{eq:conf_int} is stable for sufficiently big values of $n$, provided that $\delta$ is bounded by $n^{-\gamma^S}$.
\item Generally $\widehat{\xi_{\varepsilon,n,\delta}^{S,X}}< \widehat{\xi_{\varepsilon,n,0}^{S,X}}$ when $\delta>n^{-\gamma^S}$, which indicates that assumptions imposed on the noise functions, cf. \eqref{eq:K-EE} and \eqref{eq:K-RK}, can be relaxed. Some exceptions are observed in Tables \ref{tab:EE-A} and \ref{tab:EE-B} for small values of $n$ in the case $\delta(n)=n^{-0.9}$. When $\delta>n^{-\gamma^S}$, the expression $\widehat{\xi_{\varepsilon,n,\delta}^{S,X}}$ appears to underestimate $\xi^{S,X}(\varepsilon)$. 
\item When $n$ is fixed, $\widehat{\xi_{\varepsilon,n,\delta}^{S,X}}$ seems to achieve its maximum for $\delta=n^{-\gamma^S}$. This is in line with the intuition as $\delta=n^{-\gamma^S}$ is the maximal value of $\delta$ with no impact on $\max\bigl\{n^{-\gamma^{S}},\delta\bigr\}$, cf. \eqref{eq:xi_def}.
\item Based on the above, we suppose that $\xi^{EE,A} \approx 3$, $\xi^{EE,B} \approx 0.7$, $\xi^{RK,A} \approx 5.9$, and $\xi^{RK,B} \approx 0.6$. However, since it is impossible to test numerically all possible choices of $n$ and $\delta$, these approximations may be valid only under additional conditions on $n$ and $\delta$. 
\end{itemize}

In Figures \ref{fig:conf-reg-A} and \ref{fig:conf-reg-B}, we plotted sample confidence regions for test problems \eqref{eq:A} and \eqref{eq:B}, respectively, based on randomized explicit Euler and two-stage Runge-Kutta schemes. We used $n=25$ steps with noise level $\delta=n^{-\gamma^S}$, $S\in \{EE,RK\}$. We took $\xi^{EE,A} = 3$, $\xi^{EE,B} = 0.7$, $\xi^{RK,A} = 5.9$, and $\xi^{RK,B} = 0.6$ in order to achieve the confidence level of $1-\varepsilon=0.95$ (cf. the last bullet above). Confidence regions are shaded in navy blue. In Figure \ref{fig:conf-reg-A}, the white curve represents the exact solution $z_A$ of \eqref{eq:A}, whereas in Figure \ref{fig:conf-reg-B} -- the approximated solution $l^{RK,B}_{10^8}$ of \eqref{eq:B} obtained through the randomized RK scheme with the large number of steps.

\begin{figure}[H]
    \centering
    \begin{subfigure}{0.45\textwidth}
    \centering
        \includegraphics[width=.95\linewidth]{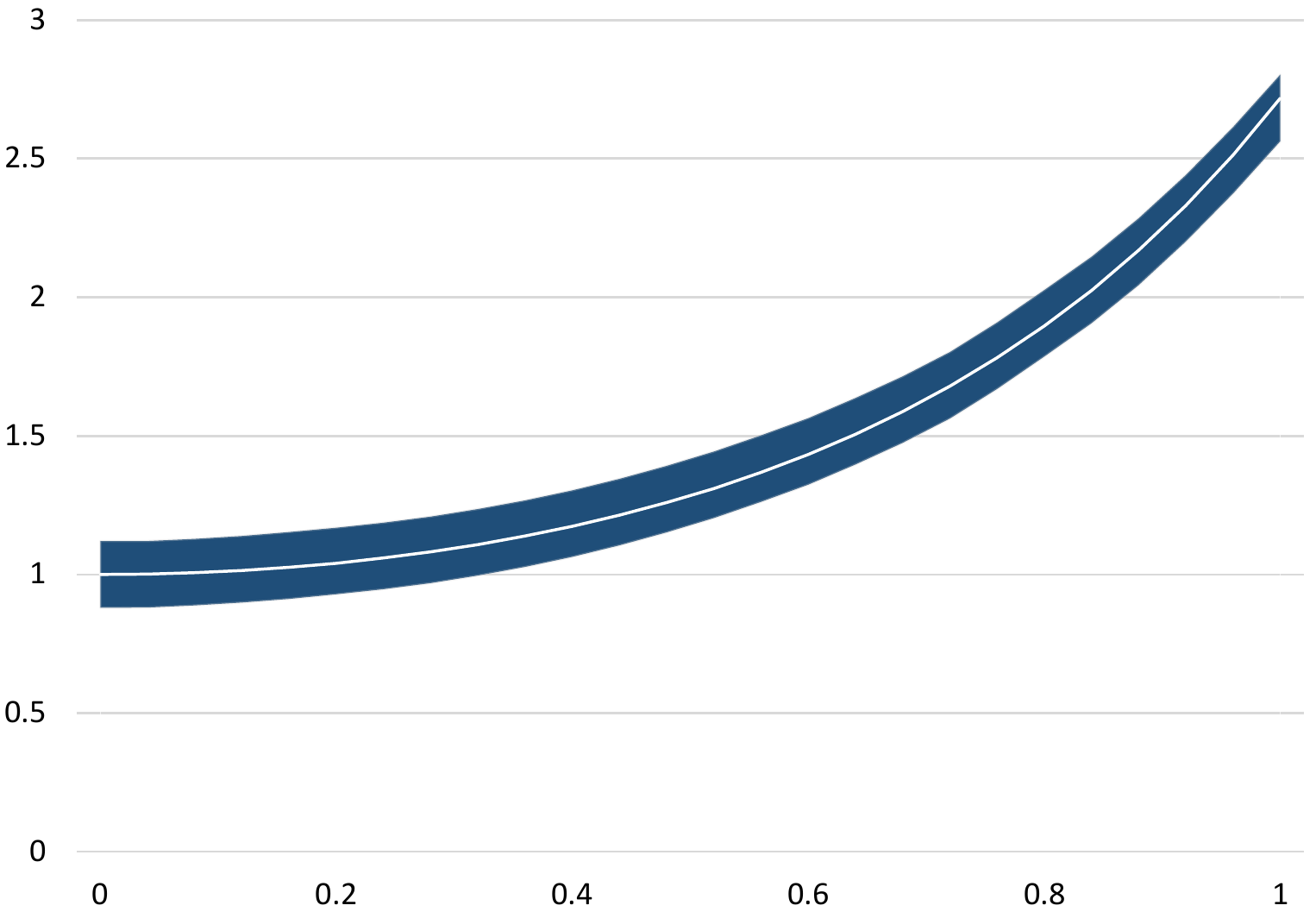}
        \subcaption{Confidence region based on the randomized explicit Euler method.}
    \end{subfigure}
    \begin{subfigure}{0.09\textwidth}
    \end{subfigure}
    \begin{subfigure}{0.45\textwidth}
    \centering
        \includegraphics[width=.95\linewidth]{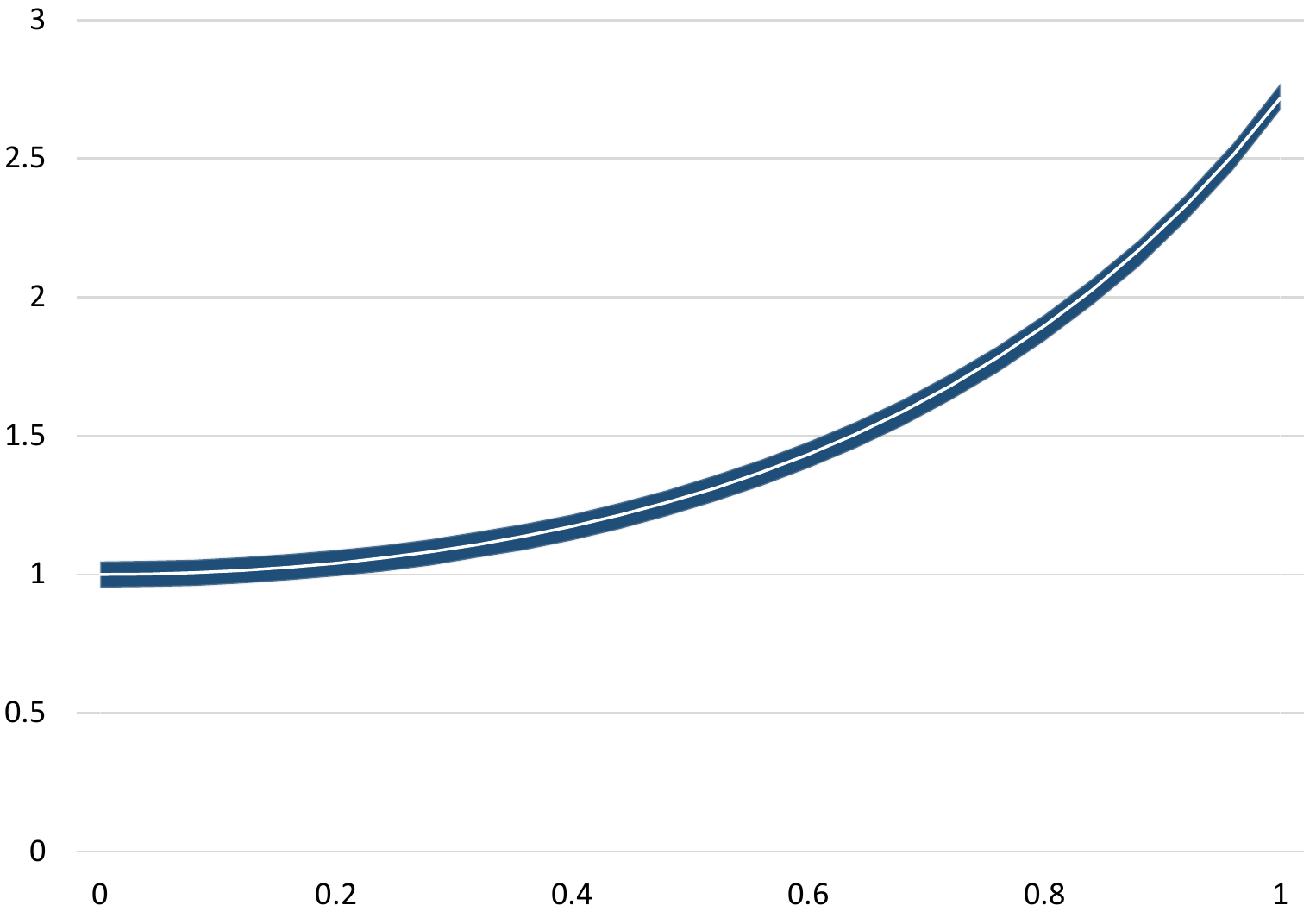}
        \subcaption{Confidence region based on the randomized two-stage Runge-Kutta method.}
    \end{subfigure}
    \caption{Confidence regions for the solution of the test problem \eqref{eq:A} with confidence level $1-\varepsilon=0.95$.}
    \label{fig:conf-reg-A}
\end{figure}

\begin{figure}[H]
    \centering
    \begin{subfigure}{0.45\textwidth}
    \centering
        \includegraphics[width=.95\linewidth]{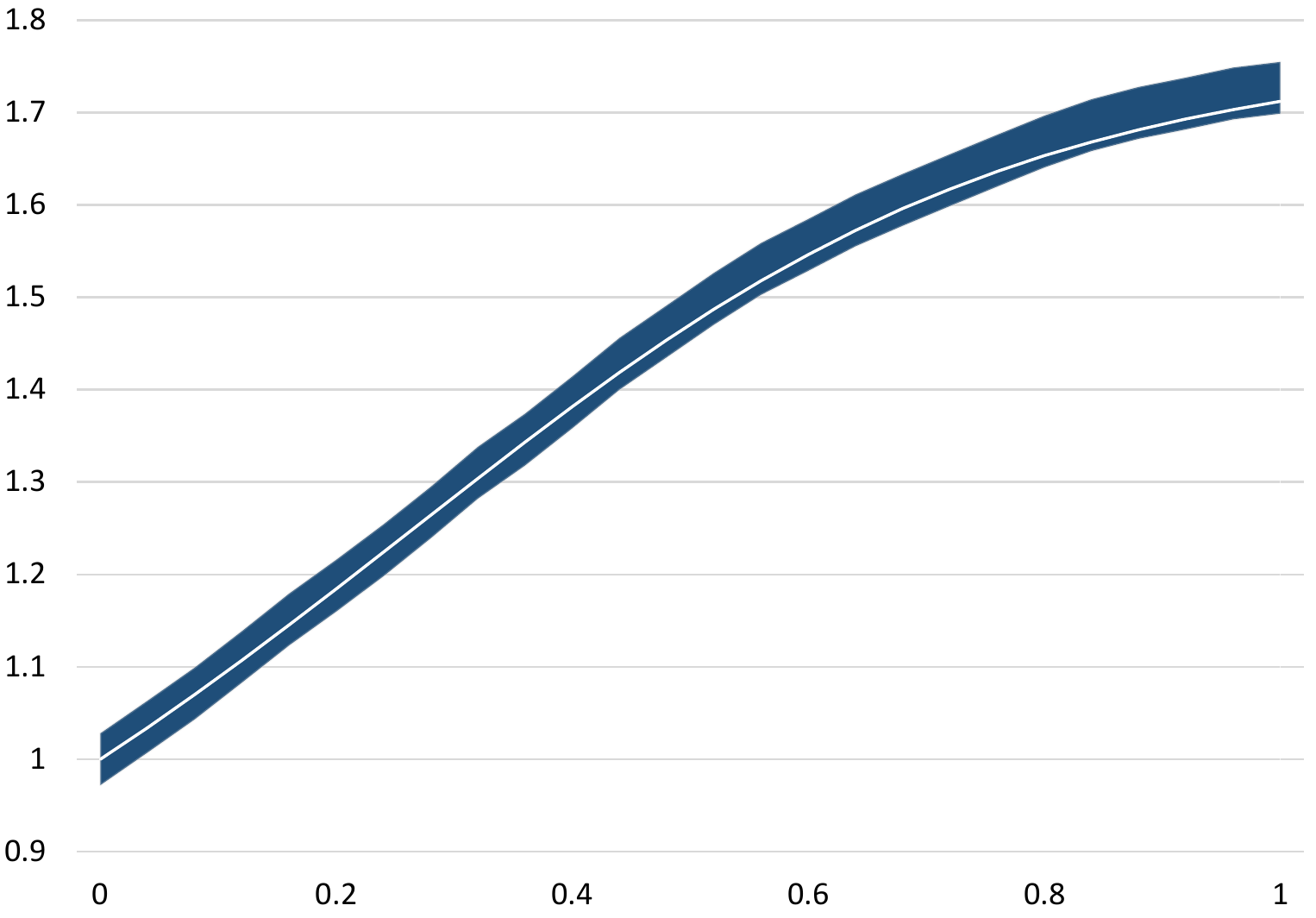}
        \subcaption{Confidence region based on the randomized explicit Euler method.}
    \end{subfigure}
    \begin{subfigure}{0.09\textwidth}
    \end{subfigure}
    \begin{subfigure}{0.45\textwidth}
    \centering
        \includegraphics[width=.95\linewidth]{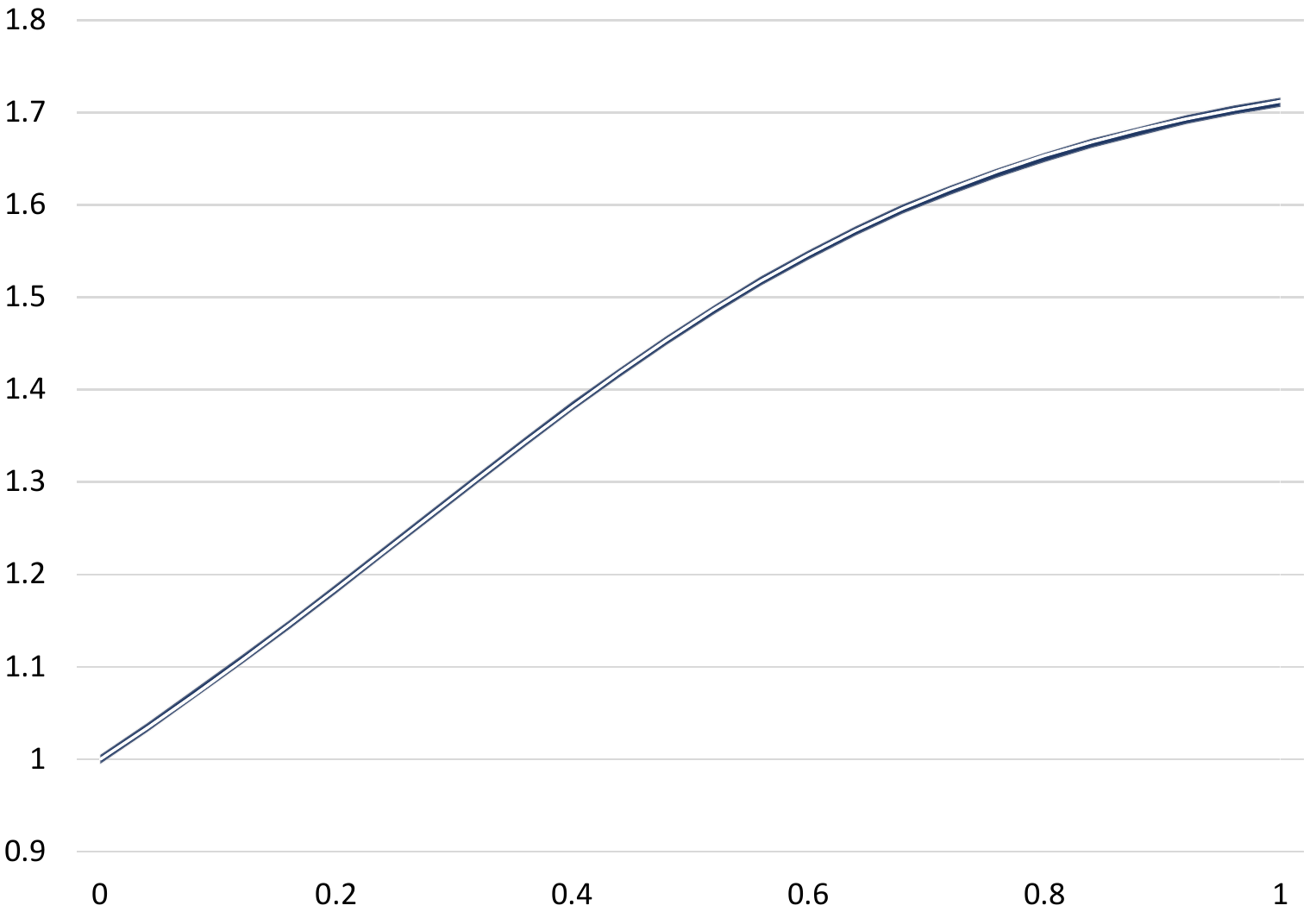}
        \subcaption{Confidence region based on the randomized two-stage Runge-Kutta method.}
    \end{subfigure}
    \caption{Confidence regions for the solution of the test problem \eqref{eq:B} with confidence level $1-\varepsilon=0.95$.}
    \label{fig:conf-reg-B}
\end{figure}

As we can see, for both test problems the randomized two-stage Runge-Kutta scheme generates more accurate confidence regions in comparison to the randomized explicit Euler scheme. This is due to the fact that $\gamma^{EE}=1<\frac32=\gamma^{RK}$.

\section{Conclusions and future work}

In this paper we have shown that the probability of the exceptional set for a class of randomized algorithms admitting a particular error decomposition has an exponential decay (see Theorem \ref{general_theorem}). A uniform almost sure convergence of such algorithms to the exact solution has been established when the step size and the noise parameter tend to $0$ (see Remark \ref{remark1}). Furthermore, we have used Theorem \ref{general_theorem} to design a confidence region for the exact solution of the IVP, uniform with respect to $\max\{h,\delta\}$ (see Corollary \ref{conf_reg} and Remark \ref{remark3}).

The general setting which has been considered comprises randomized explicit and implicit Euler schemes, and the randomized two-stage Runge-Kutta scheme under inexact information (see Corollaries \ref{cor:explicit}--\ref{cor:rk}). Theorem \ref{general_theorem} covers also the family of Taylor schemes under exact information, which has been investigated in \cite{HeinMilla}.

Our future plans include further research related to the probabilistic distribution of the error of randomized algorithms for ODEs, e.g. investigation of its asymptotic behaviour. 

\ 

\bf Acknowledgments. \rm The author would like to thank Professor Paweł Przybyłowicz for many inspiring discussions while preparing this manuscript.

This research was funded in whole or in part by the National Science Centre, Poland, under project 2021/41/N/ST1/00135.


\end{document}